\newtheorem{theorem}{Theorem}
\theoremstyle{definition}
\newtheorem{remark}[theorem]{Remark}
\newtheorem{definition}{Definition}
\newtheorem{corollary}{Corollary}
\journal{Journal of Mathematical Physics}
\begin{document}

\begin{frontmatter}



\title{On Symmetric Pseudo-Boolean Functions: Factorization, Kernels and Applications} 

\author[inst1]{Richik Sengupta}

\affiliation[inst1]{
            addressline={Skolkovo Institute of Science and Technology}, 
            city={Moscow},
            postcode={121205}, 
            country={Russia}}

\author[inst2]{Jacob Biamonte}
            \affiliation[inst2]{
            addressline={Yanqi Lake Beijing Institute of Mathematical Sciences and Applications}, 
            city={Beijing},
            postcode={101408}, 
            country={China}} 

\begin{abstract}
A symmetric pseudo-Boolean function is a map from Boolean tuples to real numbers which is invariant under input variable interchange.  We prove that any such function can be equivalently expressed as a power series or factorized and obtain the relationship between the coefficients expressed in different forms.  The kernel of a pseudo-Boolean function is the set of all inputs that cause the function to vanish identically.  Any $n$-variable symmetric pseudo-Boolean function $f(x_1, x_2, \dots, x_n)$ has a kernel corresponding to at least one hyperplane given by a constraint $\sum_{l=1}^n x_l = \lambda$ for $\lambda\in \mathbb{C}$ constant.  We use these results to analyze symmetric pseudo-Boolean functions appearing in the literature of spin glass energy functions (Ising models), quantum information and tensor networks.
\end{abstract}

\begin{keyword}
pseudo-Boolean functions \sep ordered and algebraic structures \sep abstract algebra \sep Ising models \sep optimisation \sep multilinear polynomial reformulation 
\PACS 02.10.−v \sep 02.10.Xm \sep 04.60.Nc 
\MSC 03G99 \sep 06E99 \sep 82B44 \sep 94C99 
\end{keyword}
\end{frontmatter}

\section{Introduction}

Pseudo-Boolean functions $f:\mathbb{B}^n\rightarrow \mathbb{R}$ have wide applications in applied mathematics including areas such as game theory, electrical engineering and decision theory. The term pseudo-Boolean function also arises where $f: \{-1,1\}^{n}\rightarrow  \{-1,1\}$ in the Fourier analysis of Boolean functions~\cite{Odn14}. 

Pseudo-Boolean functions are mathematically equivalent to the generalized or tunable Ising models appearing broadly in statistical physics,  classical and quantum annealing. The  pseudo-Boolean functions \cite{BH02} can be expressed as a sum of disjoint variable products: 
\begin{equation}\label{eqn:canonical}
    f({\bf x}) = a_0 \overline{x}_1 \dots \overline{x}_n + a_1 x_1 \overline{x}_2\dots \overline{x}_n+\dots + a_N x_1\dots x_n.
\end{equation}
We use ${\bf x}$ to represent the variable vector $(x_1, x_2, \dots, x_n)$, $\forall l, a_l\in \mathbb{R}$, concatenated variables such as $x_k x_m$ are multiplied with the multiplication symbol omitted and finally, $\overline{x}$ is the logical negation of Boolean variable $x$.  We can replace all $\overline{x}\mapsto 1-x$ in \eqref{eqn:canonical}.  After some calculation, one arrives at the canonical form \cite{BH02}, 
\begin{equation}\label{eqn:canonical2}
\begin{split}
        f({\bf x}) = c_0 + c_1 x_1 + c_2x_2 +\dots +c_n x_n +   \\
         c_{1,2}x_1x_2+\dots +c_{n-1, n}x_{n-1}x_n + \\
         \vdots \\ 
          +c_{1, 2,\dots n}x_1x_2\dots x_n. 
\end{split}
\end{equation}
These forms, \eqref{eqn:canonical}, \eqref{eqn:canonical2} each uniquely define any $n$-variable pseudo-Boolean function in terms of $N=2^n$ real numbers $\bf a$ and $\bf c$.   If for any input ${\bf y}\in \{0,1\}$, $f(\bf y)\geq 0$ then $f(\bf x)$ is called non-negative.  We call 
\begin{equation}\label{eqn:af}
    c_0 + c_1 x_1 + c_2x_2 +\dots + c_n x_n 
\end{equation}
the affine part of $f({\bf x})$. 
A pseudo-Boolean function is called almost-positive if all the coefficients in \eqref{eqn:canonical2}, except possibly those in the affine part \eqref{eqn:af}, are non-negative \cite{CHH+89}. Note that in the Fourier analysis of Boolean functions \cite{Bel+96}, the characters
\begin{equation}
    \chi({\bf x}) = c(S)\prod_{j\in S} x_j
\end{equation}
for $S\subseteq [n]$ and $c(S)$ constant are sometimes called linear as they satisfy $\chi({\bf x} {\bf y}) = \chi({\bf x})\chi({\bf y})$. Whereas we would call \eqref{eqn:af} linear (affine) for $c_0=0$ ($c_0\neq 0$) as it defines an $n$-hyperplane.  

A quadratic pseudo-Boolean function is given as
\begin{equation}\label{eqn:quadratic}
\frac{f({\bf x})}{x_ix_jx_k=0, \forall i\neq j \neq k}  = c_0 + c_1 x_1 + c_2x_2 +\dots +c_n x_n +
    \dots+ c_{12}x_1x_2+\dots +c_{n-1, n}x_{n-1}x_n, 
\end{equation}
where the fraction on the left-hand-side denotes $f(\bf x)$ modulo the constraint in the denominator. In other words the quadratic pseudo-Boolean function is a non-homogeneous polynomial of degree two. The minimisation of pseudo-Boolean functions is {\sf NP}-hard. We call the set of inputs such that $f({\bf x})=0$ the kernel of the pseudo-Boolean function $f$.  Deciding if a (quadratic) pseudo-Boolean function has a non-trivial kernel is {\sf NP}-complete.  

The canonical form \eqref{eqn:canonical2} appears throughout the literature on pseudo-Boolean functions \cite{BH02}. A large body of research is devoted to adding additional slack variables to reduce pseudo-Boolean functions to quadratic pseudo-Boolean functions (called quadratization) \cite{BG12}.   

We let $\sigma$ denote any permutation of the coordinates of a variable vector $(x_1, x_2, \dots, x_n)$.  We call the pseudo-Boolean functions 
\begin{equation}
    f({\bf x}) = f(\sigma {\bf x})
\end{equation}
symmetric.  In this setting, we derive two alternative canonical forms, one of which appears as a product of roots and provides an intriguing relationship between algebra and symmetric pseudo-Boolean functions.  In terms of past work, Anthony,  Boros, Crama and Gruber \cite{ABCG16} proved that any symmetric pseudo-Boolean function of $n$-variables can be quadratized by adding at most $n-2$ additional slack variables. With some caveats, Boros, Crama and Rodr{\'i}guez-Heck recovered logarithmic scaling \cite{BCR18}.  Marichal and Mathonet develop an index for measuring the influence of a pseudo-Boolean variable and consider symmetric approximations of pseudo-Boolean functions~\cite{MM12}.  Symmetric Boolean functions and their representations as polynomials that agree on the points $\{0,1\}^n$ also appear in the literature \cite{CV05, Weg87}. Similar problems were studied in \cite{D19}. The present work provides a reformulation method to expand, factor and characterise the kernel of any symmetric pseudo-Boolean function.

To explain the applications of the results we derive in this paper, let us consider some examples.  In quantum information science and in tensor networks, one often deals with specific tensors and specific quantum states that are symmetric under variable permutation.  
For example, the GHZ state (named after Greenberger, Horne and Zeilinger) is written as a vector
\begin{equation}
    \frac{e_{000}+e_{111}}{\sqrt 2}. 
\end{equation}
Here, $e_{kkk}= e_{k} \otimes e_{k} \otimes e_{k},   k \in \{0,1\}$ denotes the tensor product of the standard basis vectors $e_0 = (1,0)$ and $e_1 = (0,1)$, where the product lies in the tensor product space $[{\mathbb{C}}^2]^{\otimes 3}$. 

Since, a state is represented by a vector and a vector is uniquely determined by its coefficients in a basis expansion, one might equivalently associate to such a state a Kronecker's delta function on three binary indices, $\delta_{l m n}$ which satisfies the equation 
\begin{equation}\label{eqn:delta}
    \delta_{000} = \delta_{111} = 1
\end{equation}
and vanishes otherwise---hence $\delta_{l m n}$ has a 6 element kernel when considering the set $\{0,1\}^3$.  One would expand \eqref{eqn:delta} as a pseudo-Boolean function in the evident way, as the sum of two disjoint terms: 
\begin{equation}\label{delt}
    \delta_{k l m} = k l m  + (1-l)(1-m)(1-k) = 1 - l - m - k + m k + kl + lm 
\end{equation}
We present a general theorem which shows that a function such as the one given by (\ref{delt})  would be equivalently expressed as 
\begin{equation}
\begin{split}\label{delt2}
        \delta_{l m k} = \frac{1}{2} (l + m + k)^2 - \frac{3}{2}(l + m + k) + 1   \\
        =\frac{1}{2} (l + m + k -2) (l + m + k -1).
\end{split}
\end{equation}
We will see that the root system $\{1, 2\}$ provides a classification of (\ref{delt2}) where the function vanishes identically whenever 
\begin{equation}
    \begin{split}\label{eqn:delker} 
        l + m + k = 2, \\
        l + m + k = 1. 
    \end{split}
\end{equation}
Together the equations \eqref{eqn:delker} have six solutions over $\{0,1\}^3$ whereas the entire solution space corresponds to the two hyperplanes  $l+m+k=1,2$.  

Likewise,  one would express the $\sf XOR$ function as 
\begin{equation} 
\begin{aligned}
  \left[\bigoplus\right](l, m, k) &= l\oplus m\oplus k = \frac{2}{3}(l + m + k)^3-3(l + m + k)^2 +\frac{10}{3}(l + m + k) \\
  &= \frac{2}{3}(l + m + k - 2)(l + m + k )(l + m + k - \frac{5}{2}).
  \end{aligned}
\end{equation} 
Interestingly, we also arrive at what we call, {\it fractional roots}. These occur whenever a root (such as $5/2$) appears outside of the Boolean input domain. In this case, the kernel over the Boolean domain $\{0,1\}^3$ corresponds to only four elements, whereas all solutions correspond to the three hyperplanes $l+m+k=0, 2, 5/2$. 

Symmetric pseudo-Boolean functions also appear when considering the symmetric Ising model from statistical and quantum mechanics\cite{H21}.  This model finds applications in physical annealing devices\cite{GWPN09}. One will program the model parameters and utilize a physical process (such as quantum-assisted or classical cooling) to minimize the model to solve a computational problem instance \cite{WFB12}. The symmetric Ising model is typically considered with variables $z = \pm1$ and with the affine transformation ($z=1-2x$) can be written up to a constant as follows
\begin{equation}\label{eqn:ham}
    H({\bf x}) = \frac{J}{2} \sum_{l\neq m} x_lx_m  + h \sum_l x_l. 
\end{equation}
In (\ref{eqn:ham}) the coupling strength $J$ is non-zero (otherwise \eqref{eqn:ham} is trivial) and the bias $h$ takes only real values. Using our methods, \eqref{eqn:ham} is equivalently expressed as 
\begin{equation}
    H({\bf x}) = \frac{J}{4}\left(\sum_{l=1}^n x_l + \frac{4h}{J} -1\right) \sum_{l=1}^n x_l.   
\end{equation}
The quantity $\sum_{l=1}^n x_l$ corresponds to the total $Z$-spin for any state of the system and its expected value vanishes whenever,
\begin{equation}
    \begin{split}
        \sum_{l=1}^n x_l = 0,~~ \sum_{l=1}^n x_l = 1-\frac{4h}{J}. \\
            \end{split}
\end{equation}
and hence,  states restricted to either of these two hyperplanes correspond to the kernel of $H({\bf x})$. We will explore caveats related to the fractional roots.

Our results arise from establishing a mapping from any symmetric multivariate function with idempotent indeterminates to the algebra of polynomials in one variable. We will proceed by first establishing further properties of pseudo-Boolean functions in \S~\ref{sec:pbf}. Here we recall the foundational theorem from \cite{BP89, Ros72} which establishes that pseudo-Boolean functions take extreme values on Boolean inputs under the domain $[0,1]^n$.  We continue in \S~\ref{sec:roots} by establishing the root system and factorisations developed in this study.  We conclude in \S~\ref{sec:examples} with several examples.  

\section{Pseudo-Boolean Functions}
\label{sec:pbf}

There are multiple ways to represent \ (pseudo) boolean functions. A common method is to use a a logical basis with the $\displaystyle \land ,\ \lor ,\ \neg \ $operators giving rise to the famous normal forms like Conjuctive Normal Form, Disjunctive Normal Form etc. 

Instead of using a logical basis, we will be interested in a polynomial basis to represent pseudo-Boolean functions.

Consider the following notation:
\begin{equation}
    {\bf x}^I \equiv (x_1)^{i_1} (x_2)^{i_2} \cdots (x_n)^{i_n}
\end{equation}
\begin{equation}
    a_{\bf I} \equiv a_{i_1i_2\cdots i_n}
\end{equation}
where $I= (i_1,i_2,\ldots,i_n) \in \{0, 1\}^n$.

\begin{definition}
A pseudo-Boolean function has type $$\{0,1\}^n\to \mathbb{R}$$ and can be uniquely written as 
\begin{equation}\label{eqn:fstandard}
    f({\bf x}) = \sum_{I\in \{0, 1\}^n} a_I {\bf x}^I, 
\end{equation}
where $a_I \in \mathbb{R}$ and $x_i \in \{0,1\}$ are indeterminate. 
\end{definition}

If we  let $(x_i)^1=x_i$ and $(x_i)^0=\Bar{x_i},$ this equates \eqref{eqn:canonical} and \eqref{eqn:fstandard} where ${\bf x}^I$ will form a basis of the $2^n$ dimensional vector space of polynomials ${\bf P}$. We further remind the reader that we let $0^0=1.$
\begin{definition}
A pseudo-Boolean function has type $$\{0,1\}^n\to \mathbb{R}$$ and can be uniquely written as a multi-linear polynomial: 
\begin{equation}\label{eqn:fstandard2}
    f(x)=\sum _{I\subseteq [n]}{c}(I)\prod _{i\in I}x_{i}, 
\end{equation}
where $c(I) \in \mathbb{R}~ \textrm{and}~[n]=\{1,...,n\}$.
\end{definition}

This is equivalent to the "graded" form \eqref{eqn:canonical2}, also note $c(\emptyset)=c_0\in \mathbb{R}$. We will use this form further.

It is interesting to note that the extremal points of pseudo-Boolean functions do not change even if the input domain of the function is changed from $\{0,1\}^n$ to $ [0,1]^{n}.$

It was established in the following theorem by Boros and Prékopa \cite{BP89}, Rosenberg \cite{Ros72} and reviewed by Boros and Hammer \cite{BH02}. 

\begin{theorem}[Extrema at Boolean inputs~\cite{BP89, Ros72}]\label{thm:extrema}
Consider a pseudo-Boolean function $f(x_1,x_2,\ldots,x_n)$ and let ${\bf r} \in [0,1]^{n}$. Then there exist binary vectors
$ {\bf x}; {\bf y} \in \{0,1\}^n$ for which
$f({\bf x}) \leq f({\bf r}) \leq f({\bf y}).$ 
\end{theorem}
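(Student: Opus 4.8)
The plan is to prove the theorem by induction on the number of variables $n$, exploiting the multilinearity of the representation \eqref{eqn:fstandard2}. The base case $n=1$ is immediate: a pseudo-Boolean function of one variable is $f(x_1) = c_0 + c_1 x_1$, which is affine (monotone) on $[0,1]$, so its extreme values on $[0,1]$ are attained at the endpoints $x_1 \in \{0,1\}$. For the inductive step, I would fix a point ${\bf r} = (r_1,\dots,r_n) \in [0,1]^n$ and isolate the dependence on a single coordinate, say $x_n$. Writing $f$ in the multilinear form, collect all monomials containing $x_n$ and all those that do not: this gives $f(x_1,\dots,x_n) = g(x_1,\dots,x_{n-1}) + x_n\, h(x_1,\dots,x_{n-1})$, where $g$ and $h$ are themselves pseudo-Boolean functions in $n-1$ variables (this is exactly where multilinearity is essential — $x_n$ appears only to the first power).

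The key observation is that for fixed values of $x_1,\dots,x_{n-1}$, the map $x_n \mapsto g + x_n h$ is affine in $x_n$, hence monotone, so over $x_n \in [0,1]$ its minimum and maximum occur at $x_n = 0$ or $x_n = 1$. Concretely, $\min(f(\cdot,0), f(\cdot,1)) \le f(\cdot, r_n) \le \max(f(\cdot,0), f(\cdot,1))$ with the first $n-1$ arguments held at $r_1,\dots,r_{n-1}$. Now $f(x_1,\dots,x_{n-1},0) = g(x_1,\dots,x_{n-1})$ and $f(x_1,\dots,x_{n-1},1) = g(x_1,\dots,x_{n-1}) + h(x_1,\dots,x_{n-1})$ are both pseudo-Boolean functions of the $n-1$ variables $x_1,\dots,x_{n-1}$. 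By the induction hypothesis applied to each of these at the point $(r_1,\dots,r_{n-1}) \in [0,1]^{n-1}$, there exist binary vectors on which each is bounded below and above by its values at $(r_1,\dots,r_{n-1})$. Chaining these two inequalities — first pin down $x_n$ to a binary value, then invoke the inductive bound on the resulting $(n-1)$-variable function — produces binary vectors ${\bf x}, {\bf y} \in \{0,1\}^n$ with $f({\bf x}) \le f({\bf r}) \le f({\bf y})$.

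The main obstacle, though it is more a bookkeeping matter than a genuine difficulty, is making sure the two halves of the chained inequality compose correctly: after replacing $r_n$ by the appropriate endpoint one must verify that the inductive step is applied to the \emph{correct} one of the two functions $g$ or $g+h$ (the one realizing the min for the lower bound, the max for the upper bound), and then that the binary vector returned by the induction hypothesis, with the chosen endpoint appended as its $n$th coordinate, is the desired witness. One should also note that the argument is symmetric in the roles of the coordinates, so any coordinate may be chosen to peel off; and that the statement as phrased asserts existence of both a minimizing-type and maximizing-type binary witness, so both inequalities must be tracked in parallel through the induction. Finally, it is worth remarking that this proof actually yields the slightly stronger conclusion that the global extrema of $f$ over the cube $[0,1]^n$ are attained at vertices, which is how the result is typically used in the sequel.
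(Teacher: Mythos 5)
Your proof is correct, and it rests on the same key mechanism as the paper's: multilinearity makes $f$ affine in each coordinate separately, so any single coordinate can be rounded to $0$ or $1$ without decreasing (resp.\ increasing) the value. Where you differ is in the overall organization. The paper first invokes Weierstrass's extreme value theorem on the compact cube $[0,1]^n$ to produce a global extremizer $\hat{\bf x}$, and then perturbs its coordinates one at a time to reach a vertex; the sandwich inequality for an arbitrary ${\bf r}$ then follows because $f({\bf r})$ lies between the global min and max. You instead start from the given point ${\bf r}$ and round its coordinates directly, formalized as an induction on $n$ via the decomposition $f = g + x_n h$ of \eqref{eqn:fstandard2}. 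Your route is more elementary — it needs no continuity or compactness, only finitely many comparisons — and it is constructive in the sense that the witnesses ${\bf x},{\bf y}$ are obtained by an explicit greedy rounding of ${\bf r}$. As you note, it still delivers the stronger vertex-attainment statement as a corollary (the max of $f$ over the finitely many vertices dominates $f({\bf r})$ for every ${\bf r}$, hence equals the supremum over the cube). The bookkeeping you flag — choosing the correct one of $g$ or $g+h$ before invoking the induction hypothesis, and appending the chosen endpoint as the $n$th coordinate of the returned witness — is handled exactly as you describe, so there is no gap.
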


\begin{proof}
Since, $f$ is a continuous function and $[0,1]^n$ is a compact subset of $\mathbb{R}^n$, from Weierstrass's extreme value theorem, it follows that maxima and minima of this function are attained on $[0,1]^n.$
Thus, to prove the theorem it suffices to prove that maxima and minima of $f({\bf s})$ can be attained for ${\bf s} \in \{0,1\}^n. $ 
Consider a point 
\begin{equation}
\hat{\bf x}=(\hat{x}_1,\hat{x}_2, \ldots,\hat{x}_n ) \in \arg\max{f}(\text{or} \arg\min{f}) .
\end{equation}
Let us pick an arbitrary coordinate $\hat{x}_j$ of $\hat{\bf x}$ and keep the other optimal coordinates fixed. Express $f =m\hat{x}_j+c,$ where $m,c \in \mathbb{R}$ i.e.~$f$ is a one-dimensional hyperplane as a function of $\hat{x}_j$. Evidently, if $m \neq 0$ depending on the $sgn(m)$ the maxima (minima) of $f$ is achieved if $\hat{x}_j\in \{0,1\}$ as maxima (minima) of a monotonic function is attained at the endpoints on which it is defined. If $m=0,$ given all other coordinates of the vector $\hat{x}$ are optimal, $\hat{x}_j$ can be replaced arbitrarily with $0$ or $1$ as it does not change the value of $f.$  Since, $\hat{x}_j$ was chosen arbitrarily, $\hat{x}_j\in \{0,1\}$ for all $j\in \{1,2,\ldots,n\}$ will ensure maxima
(minima).

\end{proof}

\section{Root Systems and Factorisation}
\label{sec:roots}

By $\sigma{\bf (x)}:\mathbb{B}^n \to \mathbb{B}^n$  we denote a permutation of the elements of the tuple ${\bf x} \in \mathbb{B}^n.$

\begin{definition}\label{def:sympbf}
A pseudo-Boolean function $f:\{0,1\}^n\to \mathbb{R}$ is called symmetric if for all ${\bf x} \in \mathbb{B}^n \ f({\bf x})=f(\sigma{\bf (x)})$ for all $\sigma{\bf (x)}.$ 
\end{definition}

\begin{remark}
 The authors in \cite{ABCG16, MM12} tailor equivalent definitions to the one given above in Definition \ref{def:sympbf}.  The authors \cite{ABCG16} state equivalently that, a symmetric pseudo-Boolean function $f:\{0,1\}^n\mapsto \mathbb{R}$ depends only on the Hamming weight of the input.  The Hamming weight of a bit vector ${\bf x}$ is given by the 1-norm
    \begin{equation}
        \|{\bf x}\|_1 = \sum_{l=1}^n x_l
    \end{equation}
and hence, a pseudo-Boolean function is symmetric whenever there exists a discrete function $k:\{0, 1, \dots, n\}\mapsto \mathbb{R}$ such that $f({\bf x}) = k(\|{\bf x}\|_1)$. 
\end{remark}

\begin{remark}
From \eqref{eqn:canonical}, it follows that the canonical form of a symmetric pseudo-Boolean function is  
\begin{equation}\label{eq:canonspb}
    f({\boldsymbol {x}})=a_0+a_1\sum _{i_1}x_{i_1}+a_{2}\sum_{i_1<i_2}x_{i_1}x_{i_2}+a_{3}\sum _{i_1<i_2<i_3}x_{i_1}x_{i_2}x_{i_3}+\ldots\\
     + a_{n}\sum _{i_1<i_2<i_3< \ldots <i_n}x_{i_1}x_{i_2}x_{i_3}\ldots x_{i_n}
\end{equation}
 
\end{remark}

Beginning with a series expansion, we now derive some results specific for symmetric pseudo-Boolean functions.  

By $\{\substack{j \\ i}\}$
we denote the Stirling number of the second kind i.e.~the number of ways to partition a set of $j$ elements into $i$ nonempty subsets.

\begin{theorem}[Series expansion]\label{series}
Let  $f:\mathbb{B}^n\to \mathbb{R}$ be a pseudo-Boolean function in canonical form \eqref{eq:canonspb}, ${\bf a}=(a_1,a_2,\ldots,a_n)$  and $c_0=a_0$ then there exists a unique ${\bf c}=(c_1,c_2,\ldots,c_n)\in \mathbb{R}^n$ and such that 
\begin{equation}\label{summ}
f({\bf x})=\sum_{l=0}^nc_l\left(\sum_{k=1}^{n}x_k\right)^l,
\end{equation}
 \begin{equation}
   {\bf a}= B{\bf c},
 \end{equation}
 where 
 \begin{equation}
     B =\displaystyle \begin{pmatrix}
1! \{\substack{1 \\ 1}\} & 1! \{\substack{2 \\ 1}\} & 1! \{\substack{3 \\ 1}\} & \dotsc  & 1! \{\substack{n \\ 1}\}\\\\
0 & 2! \{\substack{2 \\ 2}\} & 2! \{\substack{3 \\ 2}\} & \dotsc  & 2! \{\substack{n \\ 2}\}\\\\
0 & 0 & 3! \{\substack{3 \\ 3}\} & \dotsc  & 3! \{\substack{n \\ 3}\}\\
\vdots  & \ddots  & \ddots  & \ddots  & \vdots\\
0 & 0 & \ldots & 0 & n! \{\substack{n \\ n}\}
\end{pmatrix}.
\end{equation}
\end{theorem}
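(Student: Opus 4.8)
The plan is to exploit the idempotency of the indeterminates, $x_k^2 = x_k$ on $\{0,1\}$, to rewrite each power $\left(\sum_{k=1}^{n} x_k\right)^{l}$ as a linear combination of the elementary symmetric polynomials $e_j({\bf x}) = \sum_{i_1<\cdots<i_j} x_{i_1}\cdots x_{i_j}$ occurring in the canonical form \eqref{eq:canonspb}, and then to match coefficients against that unique form. Concretely, I would expand $\left(\sum_{k=1}^{n} x_k\right)^{l} = \sum_{(k_1,\dots,k_l)\in[n]^l} x_{k_1}\cdots x_{k_l}$ and collapse repeated factors using $x_k^2 = x_k$: a monomial indexed by a tuple whose set of distinct entries is $S$ reduces to $\prod_{k\in S}x_k$, and the number of tuples in $[n]^l$ whose entry set equals a fixed $j$-element set $S$ is the number of surjections from an $l$-element set onto $S$, namely $j!\,\{\substack{l \\ j}\}$. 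Summing over all $S$ of each cardinality $j$ yields the key identity
\[ \left(\sum_{k=1}^{n} x_k\right)^{l} \;=\; \sum_{j=1}^{n} j!\,\{\substack{l \\ j}\}\, e_j({\bf x}), \qquad l\ge 1, \]
with the convention $\{\substack{l \\ j}\}=0$ for $j>l$, while $\left(\sum_k x_k\right)^0 = 1 = e_0({\bf x})$. (Equivalently, since $e_j$ takes the value $\binom{w}{j}$ on every input of Hamming weight $w$, this is just the classical expansion $w^{l} = \sum_{j} \{\substack{l \\ j}\}\,j!\,\binom{w}{j}$ of a power into falling factorials, read off inside the algebra of symmetric pseudo-Boolean functions.)

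Next I would substitute this identity into $\sum_{l=0}^{n} c_l\left(\sum_k x_k\right)^l$, interchange the two finite sums, and collect the coefficient of each $e_j$, obtaining $f({\bf x}) = c_0 + \sum_{j=1}^{n}\bigl(\sum_{l=1}^{n} j!\,\{\substack{l \\ j}\}\,c_l\bigr)\,e_j({\bf x})$. Since the multilinear representation \eqref{eqn:fstandard2} is unique and $e_0,e_1,\dots,e_n$ are linearly independent in the space of symmetric pseudo-Boolean functions, comparison with \eqref{eq:canonspb} forces $c_0 = a_0$ (which is already assumed) together with $a_j = \sum_{l=1}^{n} j!\,\{\substack{l \\ j}\}\,c_l$ for $1\le j\le n$, and this is precisely ${\bf a} = B{\bf c}$. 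Finally, $B$ is upper triangular with diagonal entries $B_{jj} = j!\,\{\substack{j \\ j}\} = j!\neq 0$, hence invertible, so ${\bf c} = B^{-1}{\bf a}$ is well defined and is the unique vector for which the expansion \eqref{summ} holds; this settles both existence and uniqueness at once.

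The only non-routine step is the combinatorial identity above: one must argue carefully that reduction modulo $x_k^2-x_k$ partitions the $n^l$ expanded monomials according to their underlying index set and that the multiplicity of a $j$-set is exactly the surjection count $j!\,\{\substack{l \\ j}\}$ — this is where the Stirling numbers of the second kind enter — after which everything is linear algebra over a triangular matrix. One bookkeeping remark worth stating explicitly is that all of these manipulations take place in the quotient ring $\mathbb{R}[x_1,\dots,x_n]/(x_k^2-x_k:\,k\in[n])$, equivalently the algebra of functions $\{0,1\}^n\to\mathbb{R}$, in which the canonical form \eqref{eqn:fstandard2} is unique, so the term-by-term coefficient comparison used above is legitimate.
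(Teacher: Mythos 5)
Your proposal is correct and follows essentially the same route as the paper: expand $\left(\sum_k x_k\right)^l$, reduce via idempotency $x_k^2=x_k$, count the multiplicity of each $j$-element index set (your surjection count $j!\{\substack{l\\j}\}$ is exactly the paper's sum of multinomial coefficients with positive parts), and invert the resulting upper-triangular system. Your version is slightly more explicit about why coefficient comparison is legitimate and why uniqueness follows, but the argument is the same.
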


\begin{proof}
Let us consider the symmetric pseudo-Boolean function  

\begin{equation}\label{symm}\small
\begin{split}
    f({\boldsymbol {x}})=a_0+a_1\sum _{i_1}x_{i_1}+a_{2}\sum_{i_1<i_2}x_{i_1}x_{i_2}+a_{3}\sum _{i_1<i_2<i_3}x_{i_1}x_{i_2}x_{i_3}+\ldots\\
     + a_{n}\sum _{i_1<i_2<i_3< \ldots <i_n}x_{i_1}x_{i_2}x_{i_3}\ldots x_{i_n}
\end{split}
\end{equation} 
 where $i_k \in \{1,2,\ldots,n\}.$ 
 Using idempotency $x^2=x$ and equating (\ref{symm}) to (\ref{summ}) we obtain:
 \begin{equation}
     a_0=c_0;
     a_r = \beta_{r,1}c_r+\beta_{r,2}c_{r+1}+\ldots+ \beta_{r,1+(n-r)}c_n.
 \end{equation}

The matrix of coefficients takes the form:

\begin{equation}
     \beta =\displaystyle \begin{pmatrix}
\beta_{11} & \beta_{12} & \beta_{13} & \dotsc  & \beta_{1n}\\
0 & \beta_{21} & \beta_{22} & \dotsc  & \beta_{2,n-1}\\
0 & 0 & \beta_{31} & \dotsc  & \beta_{3,n-2}\\
\vdots  & \ddots  & \ddots  & \ddots  & \vdots\\
0 & 0 & \ldots & 0 & \beta_{n1}
\end{pmatrix}.
\end{equation}

By counting the number monomials of fixed degree from (\ref{summ}) we obtain that each entry of $\beta$ is a sum of  multinomial coefficients with positive $k_i$ i.e.
 \begin{equation}
\beta_{r,j}=\sum_{k_i>0, k_1+k_2+\ldots+k_r=r+j-1}{\frac {(r+j-1)!}{k_{1}!\,k_{2}!\cdots k_{r}!}}.     
 \end{equation}
 In vectorized form the equation can be rewritten as:   
 \begin{equation}
   {\bf a}= B{\bf c},
 \end{equation}
where 
\begin{equation}
(B)_{i,j}=\beta_{i,j-i+1}=\sum_{k_i>0, k_1+k_2+\ldots+k_i=j}{\frac {j!}{k_{1}!\,k_{2}!\cdots k_{i}!}=\sum_{l=0}^i (-1)^l {(\substack{i \\ l})}(i-l)^j}=i! \{\substack{j \\ i}\}.
\end{equation}


Thus,
 \begin{equation}
     B =\displaystyle \begin{pmatrix}
1! \{\substack{1 \\ 1}\} & 1! \{\substack{2 \\ 1}\} & 1! \{\substack{3 \\ 1}\} & \dotsc  & 1! \{\substack{n \\ 1}\}\\\\
0 & 2! \{\substack{2 \\ 2}\} & 2! \{\substack{3 \\ 2}\} & \dotsc  & 2! \{\substack{n \\ 2}\}\\\\
0 & 0 & 3! \{\substack{3 \\ 3}\} & \dotsc  & 3! \{\substack{n \\ 3}\}\\
\vdots  & \ddots  & \ddots  & \ddots  & \vdots\\
0 & 0 & \ldots & 0 & n! \{\substack{n \\ n}\}
\end{pmatrix}.
\end{equation}
Here, $\{\substack{j \\ i}\}$ is the the Stirling number of the second kind i.e.~the number of ways to partition a set of $j$ elements into $i$ nonempty subsets.
The matrix $B$ is upper triangular. Hence, the unique values for ${\bf c} \in \mathbb{R}^n$ can be readily obtained using substitution.

\end{proof}

\begin{corollary}[Product factorisation]
 A symmetric pseudo-Boolean function $f:\mathbb{B}^n\to \mathbb{R}$ of degree $n$   can be expressed as
 \begin{equation}\label{fact}
      f({\bf x})=K\prod_{l=1}^n\left(\lambda_l-\sum_{k=1}^{n}x_k\right), ~~\lambda_l, K \in \mathbb{C}.
\end{equation}
\begin{proof}
 Substituting $\sum_{k=1}^{n}x_k=X$ into (\ref{summ}) we obtain a polynomial $Q(X)$ of degree $n$ in one variable. Since, $\mathbb{C}$ is algebraically closed, this polynomial will have $n$ roots $\lambda_l \in \mathbb{C}.$ 
\end{proof}
\end{corollary}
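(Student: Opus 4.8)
The plan is to leverage Theorem~\ref{series} directly: it already tells us that any symmetric pseudo-Boolean function $f$ of $n$ variables can be written as $f({\bf x}) = \sum_{l=0}^n c_l X^l$ where $X = \sum_{k=1}^n x_k$, with the coefficients $c_l$ uniquely determined by the canonical-form coefficients $a_l$ via the invertible upper-triangular matrix $B$. So first I would introduce the single-variable polynomial $Q(X) := \sum_{l=0}^n c_l X^l \in \mathbb{C}[X]$, noting that $f({\bf x}) = Q\!\left(\sum_{k=1}^n x_k\right)$ as an identity of functions on $\mathbb{B}^n$.

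Next, since $f$ is assumed to have degree $n$ (the hypothesis of the corollary), the leading coefficient $c_n$ is nonzero — this follows because $a_n = n!\,\{\substack{n\\n}\}\, c_n = n!\, c_n$ from the bottom row of $B$, so $c_n \neq 0$ iff $a_n \neq 0$, and $a_n \neq 0$ is exactly the statement that $f$ has degree $n$. Hence $Q$ is a genuine degree-$n$ polynomial over $\mathbb{C}$. Because $\mathbb{C}$ is algebraically closed, $Q$ factors completely: $Q(X) = c_n \prod_{l=1}^n (X - \mu_l)$ for roots $\mu_l \in \mathbb{C}$. Rewriting each factor as $X - \mu_l = -(\mu_l - X)$ and absorbing the $(-1)^n$ and $c_n$ into a single constant $K := (-1)^n c_n$, one obtains
\begin{equation}
 f({\bf x}) = Q\!\left(\sum_{k=1}^n x_k\right) = K \prod_{l=1}^n \left(\lambda_l - \sum_{k=1}^n x_k\right), \qquad \lambda_l := \mu_l,\ K \in \mathbb{C},
\end{equation}
which is exactly~\eqref{fact}.

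There is essentially no hard obstacle here — the corollary is a near-immediate consequence of Theorem~\ref{series} plus the fundamental theorem of algebra. The only point that deserves a sentence of care is the equality ``$f({\bf x}) = Q(\sum x_k)$ as functions on $\mathbb{B}^n$'': the factorization~\eqref{fact} is an identity of pseudo-Boolean functions (equality on all Boolean inputs), not of formal polynomials in $x_1,\dots,x_n$, and the $\lambda_l$ need not be real or lie in $\{0,1,\dots,n\}$ — indeed the $K$ and $\lambda_l$ live in $\mathbb{C}$ precisely to accommodate this. I would also remark that the factorization is unique up to reordering of the $\lambda_l$, since the polynomial $Q$ is uniquely determined (by the uniqueness clause of Theorem~\ref{series}) and its multiset of roots is then fixed. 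If one wants to allow $f$ of degree $d < n$, the same argument gives a product of $d$ linear factors times $K$, but the stated corollary restricts to the degree-$n$ case, so I would keep the proof to that setting.
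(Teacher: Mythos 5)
Your proof is correct and follows essentially the same route as the paper's: substitute $X=\sum_k x_k$ into the series expansion of Theorem~\ref{series} to get a single-variable polynomial $Q(X)$ of degree $n$, then invoke algebraic closedness of $\mathbb{C}$ to split it into linear factors. You actually supply more detail than the paper does (the check that $c_n\neq 0$ via $a_n=n!\,c_n$, the identification $K=(-1)^n c_n$, and the uniqueness of the root multiset), all of which is accurate.
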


\begin{remark}
    Note that in equation (\ref{fact}) even though $\lambda_l \in \mathbb{C},$ only real values are obtained when $x_i\in \mathbb{B}$  which matches with the corresponding values of the unfactored symmetric pseudo-Boolean function. On another note, using non-binary real or complex values as inputs may help to provide insight on properties of the given symmetric pseudo-Boolean function much like the properties (e.g. the radius of convergence)  of real taylor series like $\frac{1}{x^2+1}$ can be understood by passing to the complex domain.
\end{remark}

\begin{remark}
We note that $f({\bf x})$ vanishes identically for inputs constrained under \eqref{fact} as 
\begin{equation}
   \sum_{k=1}^{n}x_k =  \lambda, 
\end{equation}
defining the kernel of $f({\bf x})$. 
\end{remark}

\begin{remark}
  It is interesting that whereas the Hamming weight can be used to define any symmetric pseudo-Boolean function when considering inputs ${\bf x}\in \{0,1\}^n,$ considering the corresponding polynomial
$ Q(X)$ over $\mathbb{C}$ serves to classify the kernel of $f({\bf x})$.  
\end{remark}

\section{Examples}
\label{sec:examples} 

Let us consider a few examples.  We can write the delta function on $k$ binary variables (denoted $\bf m$) in its product form viz.,  
\begin{equation}
    \delta_{\bf m} = \frac{(-1)^{k
    -1}}{ (k-1)!} \prod_{l=1}^{k-1} \left[\sum_{l=1}^k x_l - l\right]. 
\end{equation}

Here, $\delta_{\bf m}$ has $k-1$ roots $\{1,2,3,\ldots,k-1\}$ as the symmetric form is of degree $k-1$ and with the kernel corresponding to $k$ long bit vectors  from $\{0,1\}^n\setminus \{0^{\times k},1^{\times k}\}.$

It can be obtained that the symmetric form of the XOR function (addition modulo 2) of $n$-variables has the form 
\begin{equation}\label{XOR symm}\small
\begin{split}
    f({\boldsymbol {x}})=\sum _{i_1}x_{i_1} -2\sum_{i_1<i_2}x_{i_1}x_{i_2}+4\sum _{i_1<i_2<i_3}x_{i_1}x_{i_2}x_{i_3}+\ldots\\
     + (-2)^n\sum _{i_1<i_2<i_3< \ldots <i_n}x_{i_1}x_{i_2}x_{i_3}\ldots x_{i_n}. 
\end{split}
\end{equation}
One might factor the symmetric form using  Theorem \ref{series} but the roots obtained are not always aesthetically pleasing.

As mentioned in the introduction, symmetric pseudo-Boolean functions also arrive when considering the symmetric Ising model from statistical and quantum mechanics.  In \eqref{eqn:ham} we established that 
\begin{equation}\label{eqn:ham2}
    H({\bf x}) = \frac{J}{2} \sum_{l\neq m} x_lx_m  + h \sum_l x_l = \frac{J}{4}\left(\sum_{l=1}^n x_l + \frac{4h}{J} - 1\right) \sum_{l=1}^n x_l.  
\end{equation}
The quantity $\sum_{l=1}^n x_l$ corresponds to the total spin along the Z-axis for any state of the system, the non-zero coupling strength $J$ sets the interaction strength and $h$ is called the local bias.  

To perform further analysis, we will lift our polynomial to a matrix embedding.  We consider the vector space $[\mathbb{C}^2]^{\otimes n} \cong \mathbb{C}^{2^n}$ where $n$ corresponds to the total variable number appearing in \eqref{eqn:ham2}.  

We will recall the notation $e_{{\bf x}}= e_{x_1} \otimes e_{x_2} \otimes\cdots \otimes  e_{x_n}$.  Here, $\forall l, x_l \in \{0,1\}$ this expression denotes the tensor product of the standard basis vectors $e_0 = (1,0)$ and $e_1 = (0,1)$ where the product lies in the tensor product space $[\mathbb{C}^2]^{\otimes n}$.  We will follow Dirac's notation and replace vectors $e_{{\bf x}}$ with $\ket{\bf x}$ and covectors $(e_{{\bf x}})^\dagger$ with $\bra{\bf x}$.  Then a general quantum state can be written as 
\begin{equation}
    \ket{\psi}= \sum_{{\bf x}\in\{0,1\}^n} c_{\bf x}\ket{\bf x}
\end{equation}
which is a unit vector in $[\mathbb{C}^2]^{\otimes n}$ when 
\begin{equation}
    \braket{\psi}{\psi} = \sum_{{\bf x}\in\{0,1\}^n} |c_{\bf x}|^2 = 1. 
\end{equation}
We then note that 
\begin{equation}
    \ket{\psi} \in [\mathbb{C}^2]^{\otimes n} \cong \mathbb{C}^{2^n} \cong \text{span}_\mathbb{C}\{ \ket{\bf x}| {\bf x}\in \{0,1\}^n\}. 
\end{equation}
We lift any input vector ${\bf x}\in \{0,1\}^n$ to a normalized vector as
\begin{equation}\label{eqn:r}
 {\bf x} \mapsto \ket{\bf x}.  
\end{equation}

We then consider the  embedding of the pseudo-Boolean function $H({\bf x})$ 
 expressed in form \eqref{eqn:canonical} into the vector space of diagonal matrices :  
\begin{equation}
    H({\bf x}) \mapsto {\hat H} = \sum_{{\bf I}\in\{0,1\}^n} H(I) \ket{\bf I}\bra{\bf I} \in \text{diag}~\text{mat}_\mathbb{R}(2^n). 
\end{equation}

Here, $H(I)$ is the value of the function $H({\bf x})$ on the boolean input tuple $I$ and $\ket{\bf I}\bra{\bf I}$ is the corresponding outer product.

 We can construct an inverse (called, flattening) onto the space of polynomials  as 
\begin{equation}
    H'({\bf x}) = \sum_{I\in \{0, 1\}^n} \bra{I} {\hat H} \ket{I} {\bf x}^I, 
\end{equation} 
and we see that $\forall {\bf y}\in \{0,1\}^n$, $H'({\bf y})=H({\bf y})$ and on binary tuples   
\begin{equation}
    H({\bf I}) = \bra{{\bf I}}\hat H \ket{{\bf I}}. 
\end{equation}

Hence, we provided a matrix form of pseudo-Boolean function which finds application in spin-glass minimisation. When considering the linear embedding of the symmetric Ising model we see that the operator has a kernel over binary inputs for $\sum_{l=1}^n x_l=0$ and whenever

\begin{equation}
    \frac{h}{J}=\frac{k}{4},~ k \in [-(n-1),1].
\end{equation}

\section{Conclusion}

The authors anticipate that extensions of these results would proceed in several ways.  To prove existence of the roots required both symmetry of the basis ($\sum_{l=1}^n x_l$) and the idempotent property of the indeterminates ($x^2=x$).  These properties could be generalized to appear in another setting of symmetric and idempotent algebras.  

In terms of the basis, the quantity $\sum_{l=1}^n x_l$ appears in physics as the total spin of a system along the $Z$-axis.  When the variables are restricted to the Boolean domain, it becomes the Hamming weight which classifies symmetric Boolean functions in terms of maps from $[0:1+n]$ to $\{0, 1\}$.  Symmetric pseudo-Boolean functions $f({\bf x})$ are  classified as maps defined on $[0:1+n]$ by the existence of a function $m(|{\bf x}|)$ where $|{\bf x}|$ is the Hamming weight and $m(|{\bf x}|) = f({\bf x})$~\cite{ABCG16}.  Considering Theorem \ref{thm:extrema}, it is shown by the authors \cite{BP89, Ros72} that for any ${\bf r}\in [0,1]^n$ and pseudo-Boolean function $f$, there exist bit strings ${\bf x}, {\bf y}$ such that $f({\bf x}) \leq f({\bf r}) \leq f({\bf y})$. The intermediate case where ${\bf r}\in [0,1]^n$ appears in this work as the roots ($\lambda$) of symmetric pseudo-Boolean functions where shown to not always correspond to solutions of the equation $\sum_{l=1}^n x_l = \lambda$ for ${\bf x}\in \{0,1\}^n$.  Indeed, ${\bf x}$ can not only take algebraic values, but in general, complex values.  

Considering further the basis, consider the delta function on two binary inputs $x, y$: 
\begin{equation}
    \delta_{x,y} = 1 - (x-y)^2.  
\end{equation}
We see that $\delta_{x,y} = \delta_{y,x}$ where the basis elements $\{1, (x-y)^2\}$ are both permutation symmetric and clearly differ from the basis $\{1, x+y, (x+y)^2\}$ considered in the paper.  The function nonetheless still factors as 
\begin{equation}
    \delta_{x,y} = \left(1-(x-y)\right)\left( 1+(x-y) \right). 
\end{equation}
This example shows that the factorisation is not restricted to the basis functions
\begin{equation}
    \left\{\left(\sum_{l=1}^n x_l\right)^k|k\in [0:n]\right\}. 
\end{equation}

\section{Acknowledgement} 
We thank Soumik Adhikary, Will Donovan and Akshay Vishwanathan for various discussions on pseudo-Boolean functions.  The authors thank Nike Dattani for mentioning to us that sublinear scaling in the quadratization of symmetric pseudo-Boolean functions was first recovered by Boros, Crama and Rodr{\'i}guez-Heck.  

\bibliographystyle{alpha} 
\bibliography{refs}

\end{document}